\def\NAT@def@citea{\def\@citea{\NAT@separator}}
\theoremstyle{plain}
\newtheorem{theorem}{Theorem}[section]
\newtheorem{lemma}[theorem]{Lemma}
\theoremstyle{definition}
\newtheorem{definition}[theorem]{Definition}
\newtheorem{example}[theorem]{Example}
\theoremstyle{remark}
\newtheorem{remark}{Remark}
\begin{document}

\title{Non-autonomous overdetermined problems for the normalized $p\,$-Laplacian}

\author{
{L.~Cadeddu\footnote{Department of Mathematics and Computer Science, University of Cagliari, via Ospedale 72, 09124 Cagliari, Italy}, A.~Greco\textsuperscript{*}\thanks{CONTACT A.~Greco. Email: greco@unica.it}\, and B.~Mebrate\footnote{Department of Mathematics,
Wollo University,
Dessie, Ethiopia}}
}

\maketitle

\begin{abstract}
We present existence and nonexistence results on the solution of an overdetermined problem for the normalized $p$-Laplacian in a bounded open set, with $p$ ranging from~$1$ to infinity. More precisely we consider a non-constant Neumann condition at the boundary. The definitions and statements needed to understand the main results are recalled in detail.
\end{abstract}

Keywords. Overdetermined problems, Viscosity solutions, Normalized infinity-Laplacian

\section{Introduction}

Let $\Omega\subset\mathbb{R}^{n}$ be a bounded open set with a differentiable boundary~$\partial\Omega$, whose outer normal we denote by~$\nu$. Choose $\bar x \in \Omega$ and define
\begin{equation}\label{eq1.01}
R_1=\min_{x\in\partial\Omega}|x-\bar x|,
\quad
R_2=\max_{x\in\partial\Omega}|x-\bar x|.
\end{equation}

In this paper we study overdetermined problems ruled by the normalized $p$-Laplacian for $p \in [1,\infty]$: more precisely, we study the problem 
\begin{equation}\label{eq1}
\left\{\begin{array}{ll}
-\Delta_p^N u = 1 & \mbox{in $\Omega$;}\\
\noalign{\medskip}
u = 0,\;-\dfrac{\partial u}{\partial\nu}= q(|x-\bar x|) & \mbox{on $\partial\Omega$,}
\end{array}\right.
\end{equation}
where $q(r)$ is a real-valued function defined on $[R_1,R_2]$. For a smooth function~$u$ with nonvanishing gradient $Du$, the operator $\Delta_p^N u$ is given by
\begin{equation}\label{normalized}
\Delta_p^N u
=
\begin{cases}
\frac1p
\,
|Du|^{2 - p}
\,
\mathop{\rm div} \big(|Du|^{p - 2} \, Du \big),
&p \in [1, \infty);
\\
\noalign{\medskip}
|Du|^{-2}
\,
\langle D^{2}u \, Du,Du \rangle
&p = \infty.
\end{cases}
\end{equation}
The relationship between the case when $p$ is finite and the case $p = \infty$ is put into evidence by the equality
$$
\Delta_p^N u = \frac{\, p - 1 \,}p \, \Delta_\infty^N \, u
+
\frac1{\, p \,} \, \Delta_1^N u
\quad
\mbox{for $p \in [1,\infty)$}
$$%
(see \cite[(1.2)]{BK1} or \cite[(1.6)]{K1}): for a given function $u$ and at a fixed point $x$ such that $Du(x) \ne 0$, it follows that $\Delta_p^N u(x) \to \Delta_\infty u(x)$ when $p \to \infty$. The term ``normalized'' is used to make a distinction from the \textit{classical} $p$-Laplace operator $\Delta_{p\,} u$ given by
\begin{equation}\label{classical}
\Delta_{p\,} u =
\begin{cases}
\mathop{\rm div} \big(|Du|^{p - 2} \, Du \big),
&p \in [1, \infty);
\\
\noalign{\medskip}
\langle D^2 u \, Du,Du \rangle,
&p = \infty.
\end{cases}
\end{equation}
Solutions to~\eqref{eq1} are intended in the viscosity sense for $p \in [1,\infty]$: the definition of a viscosity solution is recalled in Section~\ref{vis}, together with the meaning of $\partial u / \partial \nu$ in~\eqref{eq1}. In the special case when $p = 1$, we also consider solutions which are smooth near the boundary (see below). Concerning the usual Laplace operator $\Delta = \Delta_2$, in the fundamental work~\cite{S1} Serrin showed, in particular, that if $\Omega$ is sufficiently smooth and there exists a solution $u\in C^2(\overline{\Omega})$ to the problem
\begin{equation}\label{Serrin_problem}
\left\{\begin{array}{ll}
-\Delta u = 1 & \mbox{in $\Omega$;}\\
\noalign{\medskip}
u = 0,\;-\dfrac{\partial u}{\partial\nu}=q & \mbox{on $\partial\Omega$,}
\end{array}\right.
\end{equation}
where $q$ is any constant, then $\Omega$ is a ball centered at some $x_1 \in \mathbb R^n$ and $u$ is given by $u(x)=\dfrac{\, r^{2}-|x-x_1|^{2}}{2n}$, where $r$ is the radius of~$\Omega$. Notice that problem~\eqref{Serrin_problem} is invariant under translations, hence the point $x_1$ is arbitrary, while the radius $r$ depends on the value of $q$.
Buttazzo and Kawohl in \cite{BK2} studied the corresponding overdetermined problem both for the infinity-Laplacian:
\begin{equation}\label{eq1.101}
\left\{\begin{array}{ll}
-\Delta_\infty \, u = 1 & \mbox{in $\Omega$;}\\
\noalign{\medskip}
u = 0,\; -\dfrac{\partial u}{\partial\nu}= q & \mbox{on $\partial\Omega$}
\end{array}\right.
\end{equation}
and the normalized one:
$$
\left\{\begin{array}{ll}
-\Delta_\infty^N \, u = 1 & \mbox{in $\Omega$;}\\
\noalign{\medskip}
u = 0,\; -\dfrac{\partial u}{\partial\nu}= q & \mbox{on $\partial\Omega$.}
\end{array}\right.
$$
Banerjee and Kawohl \cite{BK1}, instead, considered the corresponding problem for the normalized $p$-Laplacian with $p \in (1,\infty)$. They proved that if $u\in C(\overline{\Omega})$ is a viscosity solution to
$$
\left\{\begin{array}{ll}
-\Delta_p^Nu = 1 & \mbox{in $\Omega$;}\\
\noalign{\medskip}
u = 0,\; -\dfrac{\partial u}{\partial\nu}= q & \mbox{on $\partial\Omega$},
\end{array}\right.
$$%
then $\Omega$ is a ball. The result is also true if $p=1$ provided that $u$ is smooth near the boundary \cite[Remark~4.3]{K1}. However, for $p=\infty$ it is generally false \cite[Theorem~2]{BK2}. Problem \eqref{eq1.101}, where $\Omega$ contains the origin and $u$ satisfies a non-constant Neumann condition at the boundary $\partial\Omega$, given by
\begin{equation}\label{Neumann}
-\dfrac{\partial u}{\partial\nu}=q(|x|),
\end{equation}
has been studied in~\cite{G2}. The corresponding problem for the classical $p$-Laplacian with finite~$p > 1$, namely
$$
\left\{\begin{array}{ll}
-\Delta_{p\,} u = 1 & \mbox{in $\Omega$;}\\
\noalign{\medskip}
u = 0,\; -\dfrac{\partial u}{\partial\nu}= q(|x|) & \mbox{on $\partial\Omega$},
\end{array}\right.
$$%
was considered in \cite{G3,G1,GP}. The main results were basically focused on the geometry of~$\Omega$. This paper deals with a similar problem related to the normalized $p$-Laplacian, $p \in [1,\infty]$. Most of the notations we use are standard. By $c_p$ we denote the constant
\begin{equation}\label{c_p}
c_p =
\begin{cases}
\frac{p}{\, p + n - 2 \,}, &p \in [1, \infty);
\\
\noalign{\medskip}
1, &p = \infty.
\end{cases}
\end{equation}
Our first result is the following:
\begin{theorem}\label{main}
Let $p \in [1,\infty]$ and let $c_p$ be as above.
\begin{enumerate}
\item Suppose that the equation $q(r)- c_p \, r=0$ possesses a unique solution $R\in [R_1,R_2],$ and\/ $(q(r)-c_p \, r)(r-R)>0$ for all\/ $r \in [R_1,R_2] \setminus \{R\}$. Then problem \eqref{eq1} has a viscosity solution only in the special case when $R_1=R=R_2$ (i.e., $\Omega=B(\bar x,R)$). If, instead, $R_1<R_2,$ then problem~\eqref{eq1} has no viscosity solution.
\item Suppose that the function $\rho(r)=\frac{q(r)}r$ is strictly increasing. Then problem \eqref{eq1} has a viscosity solution only if\/ $\Omega$ is a ball centered at~$\bar x$.
\item If\/ $q$ is continuous, and if the equation $q(r)-c_p \, r=0$ does not possess any solution, then problem~\eqref{eq1} has no viscosity solution.
\end{enumerate}
\end{theorem}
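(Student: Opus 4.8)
The plan is to reduce all three parts to two \emph{universal} inequalities satisfied by any viscosity solution of~\eqref{eq1}, namely
\[
q(R_1)\ge c_p\,R_1
\quad\text{and}\quad
q(R_2)\le c_p\,R_2 ,
\]
obtained by comparing $u$ with the explicit radial solutions centred at~$\bar x$. For $\rho>0$ set $U_\rho(x)=\tfrac{c_p}{2}\,(\rho^2-|x-\bar x|^2)$. A direct computation, treating the finite and the $p=\infty$ cases through the radial forms $\Delta_p^N v=\tfrac1p\big((p-1)v''+\tfrac{n-1}{r}v'\big)$ and $\Delta_\infty^N v=v''$ read off from~\eqref{normalized}, shows with $c_p$ as in~\eqref{c_p} that $-\Delta_p^N U_\rho=1$, that $U_\rho=0$ on $\partial B(\bar x,\rho)$, and that its outer normal derivative there is $\partial_\nu U_\rho=-c_p\,\rho$. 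Thus $U_\rho$ is a classical (hence viscosity) solution of the problem on $B(\bar x,\rho)$ with constant Neumann datum $c_p\,\rho$, which also certifies existence in the symmetric case.

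First I would record the geometry. Since $\bar x\in\Omega$ and $R_1,R_2$ are defined by~\eqref{eq1.01}, the open ball $B(\bar x,R_1)$ lies in~$\Omega$ and $\overline\Omega\subset\overline{B(\bar x,R_2)}$; moreover at a nearest boundary point $x_1$ (with $|x_1-\bar x|=R_1$) and at a farthest one $x_2$ (with $|x_2-\bar x|=R_2$) the outer normal to $\partial\Omega$ is radial, so it coincides with the outer normal of the corresponding sphere. Next I would invoke the comparison principle for $-\Delta_p^N\,\cdot=1$ (as in~\cite{BK1,K1}; for $p=1$ the solution is assumed smooth near $\partial\Omega$, so the comparison is classical). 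Noting that $u\ge0$ by comparison with the constant $0$, on $B(\bar x,R_1)$ one has $U_{R_1}=0\le u$ on the sphere, whence $U_{R_1}\le u$ inside with equality at~$x_1$; on $\Omega$ one has $u=0\le U_{R_2}$ on $\partial\Omega$, whence $u\le U_{R_2}$ inside with equality at~$x_2$.

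The crucial step is to pass from these one-sided orderings to orderings of the normal derivatives. Because $u-U_{R_1}\ge0$ vanishes at the boundary point $x_1$ and $U_{R_2}-u\ge0$ vanishes at~$x_2$, the first-order (boundary point) inequality gives $\partial_\nu u(x_1)\le\partial_\nu U_{R_1}(x_1)=-c_p R_1$ and $\partial_\nu U_{R_2}(x_2)=-c_p R_2\le\partial_\nu u(x_2)$. Recalling that the datum forces $\partial_\nu u(x_i)=-q(R_i)$, these read exactly as the two universal inequalities above. I expect this to be the main obstacle: the normal derivative of $u$ is available only in the viscosity sense of Section~\ref{vis}, so the step must be carried out by using the smooth barriers $U_{R_i}$ as admissible test functions at $x_1,x_2$ and reading off the viscosity formulation of the Neumann condition; since $U_{R_i}$ has nonvanishing gradient at $x_i$, the operator is non-degenerate there and this is legitimate.

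With the two inequalities in hand the three statements are immediate. For~(1), the condition $(q(r)-c_p r)(r-R)>0$ forces $q(r)-c_p r<0$ for $r<R$ and $>0$ for $r>R$; together with $q(R_1)-c_p R_1\ge0$ and $q(R_2)-c_p R_2\le0$ this yields $R_1\ge R$ and $R_2\le R$, and since $R\in[R_1,R_2]$ we conclude $R_1=R=R_2$, i.e.\ $\Omega=B(\bar x,R)$, where existence is given by $U_R$; if $R_1<R_2$ this is impossible, so there is no solution. For~(2), dividing by $R_i>0$ gives $\rho(R_1)\ge c_p\ge\rho(R_2)$ with $\rho=q/r$; strict monotonicity of $\rho$ and $R_1\le R_2$ then force $R_1=R_2$, so $\Omega$ is a ball centred at~$\bar x$. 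For~(3), if $q(r)-c_p r$ never vanishes then, $q$ being continuous, by the intermediate value theorem it keeps a constant nonzero sign on $[R_1,R_2]$, which cannot hold simultaneously with $q(R_1)-c_p R_1\ge0$ and $q(R_2)-c_p R_2\le0$; hence no viscosity solution exists.
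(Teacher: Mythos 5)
Your proposal is correct and follows essentially the same route as the paper: comparison with the radial barriers $\frac{c_p}{2}(R_i^2-|x-\bar x|^2)$ on $B(\bar x,R_1)$ and on $\Omega$, the viscosity (test-function) reading of the Neumann condition at a nearest and a farthest boundary point to obtain $q(R_1)\ge c_p R_1$ and $q(R_2)\le c_p R_2$, and then the same three elementary deductions. The only cosmetic difference is that you verify the radial solution by direct computation where the paper cites its Lemma on radial solutions.
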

The theorem is proved in Section~\ref{proof} by means of a comparison argument. The result shows that the behavior of the normalized $p$-Laplacian with respect to the overdetermined problem~\eqref{eq1} enjoys a continuity property at infinity: more precisely, since $c_\infty = 1 = \lim\limits_{p \to \infty} c_p$, the statement for $p = \infty$ is readily obtained from the case when $p$ is finite by just letting $p \to \infty$. By contrast, the classical (not normalized) $p$-Laplacian~\eqref{classical} exhibits a different behavior: indeed, a result similar to Theorem~\ref{main} valid for $p \in (1,\infty)$ has been proved in \cite[Corollary~1.2]{GP}. There, the ratio $q(r)/r^\frac1{p - 1}$ (which is obtained by letting $\varepsilon_0 = p - 1$) is required to be non-decreasing: such a ratio tends to $q(r)$ when $p \to \infty$. However, the corresponding result for the infinity-Laplacian which is found in \cite[Theorem~1.1]{G2} requires monotonicity of $q(r)/r^{1/3}$. When $q$ is constant, counterexamples are known: see \cite[p.~241]{BK2}.

Unlike \cite[Corollary~1.2]{GP}, our Theorem~\ref{main} also applies to the special case when $p = 1$. In such a case, as mentioned before, we focus on solutions~$u$ which are smooth near the boundary. To this purpose, we adopt the following definition:

\begin{definition}
Let\/ $\Omega \subset \mathbb R^n$ be a bounded open set of class $C^2$, and for $\varepsilon > 0$ define $\Omega_\varepsilon = \{\, x \in \Omega \mid \mathop{\rm dist}(x, \partial \Omega) < \varepsilon \,\}$. We say that a viscosity solution to~{\rm(\ref{eq1})} is a \textit{smooth solution near the boundary} if:
\begin{enumerate}
\item $u \in C^2(\overline \Omega_\varepsilon)$ for some $\varepsilon > 0$;
\item $Du \ne 0$ in $\overline \Omega_\varepsilon$;
\item the equation $-\Delta_p^N u = 1$ is satisfied in the classical sense in $\overline \Omega_\varepsilon$;
\item the boundary conditions in~\eqref{eq1} hold pointwise. 
\end{enumerate}
\end{definition}

\noindent When $p = 1$, a solution~$u$ which is smooth near the boundary satisfies the equation
\begin{equation}\label{degenerate}
(n - 1) \, |Du| \, H(x) = 1
\quad\mbox{in $\overline \Omega_\varepsilon$,}
\end{equation}
where $H(x)$ is the mean curvature of the level surface $u = {}$constant passing through the point~$x$ (see \cite[(14.102)]{GT} and \cite[Remark 4.3]{K1}). Note that if \eqref{eq1} has a solution~$u$ which is smooth near the boundary, then the surface $\partial \Omega$ must have a positive mean curvature $H(x)$ as a consequence of~\eqref{degenerate}. In such a case we may prescribe the Neumann condition by means of a function $q(|x-\bar x|, \, H(x))$ that not only depends on the distance from $x \in \partial \Omega$ to some fixed point $\bar x \in \Omega$, but is also allowed to depend on the mean curvature $H(x)$ of~$\partial \Omega$. More precisely, we consider the overdetermined problem
\begin{equation}\label{eq1H}
\left\{\begin{array}{ll}
-\Delta_1^N u = 1 & \mbox{in $\Omega$;}\\
\noalign{\medskip}
u = 0,\;-\dfrac{\partial u}{\partial\nu}= q(|x-\bar x|, \, H(x)) & \mbox{on $\partial\Omega$,}
\end{array}\right.
\end{equation}
where $q\colon [R_1,R_2]\times(0,\infty) \to (0,\infty)$ is a prescribed, positive function. In the case when $q(r,h)$ is independent of~$h$, problem~\eqref{eq1H} clearly reduces to~\eqref{eq1}. We have:

\begin{theorem}\label{p=1}
Let\/ $\Omega$ be a bounded open set of class~$C^2$. Choose $\bar x \in \Omega$ and define $R_1,R_2$ as in~{\rm(\ref{eq1.01})}. Consider a positive function $q(r,h)$ such that
\begin{enumerate}
\item $q(r,h)$ is monotone non-decreasing in $h$ for every $r \in [R_1, R_2]$;
\item the ratio $\frac{q(r, \, 1/r)}r$ is strictly increasing.
\end{enumerate}
If problem \eqref{eq1H} has a solution $u$ which is smooth near the boundary, then $R_1=R_2$ (i.e., $\Omega=B(\bar x,R_1)$).
\end{theorem}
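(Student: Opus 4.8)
The plan is to reduce the whole statement to two scalar inequalities evaluated at the boundary points nearest to and farthest from~$\bar x$, and then to invoke the strict monotonicity in hypothesis~(2). First I would record the governing identity on $\partial\Omega$. Since $u$ is smooth near the boundary with $Du\ne 0$ and $u=0$ on $\partial\Omega$ (with $u$ positive inside, as forced by $-\Delta_1^N u = 1$), the gradient points along the inner normal, so the Neumann condition reads $|Du| = -\partial u/\partial\nu = q(|x-\bar x|,H(x))$ on $\partial\Omega$. On the other hand $\partial\Omega$ is itself a level surface of~$u$, so \eqref{degenerate} gives $(n-1)\,|Du|\,H(x)=1$ there. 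Eliminating $|Du|$ yields the fundamental boundary relation
\[
q\big(r(x),H(x)\big)=\frac{1}{(n-1)\,H(x)},\qquad x\in\partial\Omega,
\]
where $r(x)=|x-\bar x|$ and $H(x)>0$ is the mean curvature of~$\partial\Omega$.

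Next I would exploit the geometry of the extremal points. Let $x_1,x_2\in\partial\Omega$ realize $r(x_1)=R_1$ and $r(x_2)=R_2$ from~\eqref{eq1.01}. Because $B(\bar x,R_1)\subseteq\Omega$ touches $\partial\Omega$ internally at~$x_1$, an interior tangent sphere comparison gives $H(x_1)\le 1/R_1$; because $\overline\Omega\subseteq\overline{B(\bar x,R_2)}$ touches $\partial\Omega$ from outside at~$x_2$, an exterior tangent sphere comparison gives $H(x_2)\ge 1/R_2$. Combining each inequality with the boundary relation and the hypothesis that $q(r,h)$ is non-decreasing in~$h$, I obtain at~$x_1$
\[
\frac{R_1}{n-1}\le\frac{1}{(n-1)\,H(x_1)}=q\big(R_1,H(x_1)\big)\le q\!\left(R_1,\tfrac{1}{R_1}\right),
\]
hence $g(R_1)\ge \tfrac1{n-1}$, where $g(r):=q(r,1/r)/r$; symmetrically at~$x_2$ one finds $g(R_2)\le\tfrac1{n-1}$. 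Therefore $g(R_1)\ge g(R_2)$, and since $g$ is strictly increasing by hypothesis~(2) this forces $R_1\ge R_2$. As $R_1\le R_2$ always holds, we conclude $R_1=R_2$, i.e.\ $\Omega=B(\bar x,R_1)$.

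The main obstacle I anticipate is not the final monotonicity step but the careful bookkeeping of sign and orientation conventions for the mean curvature, together with a clean justification of the two curvature inequalities $H(x_1)\le 1/R_1$ and $H(x_2)\ge 1/R_2$. These are exactly the interior and exterior tangent sphere conditions at the extremal points, and the $C^2$ regularity of $\partial\Omega$ together with the positivity $H>0$ (guaranteed by~\eqref{degenerate}) is what makes the relation well defined and the comparisons rigorous. To fix the convention I would cross-check against the radial solution on a ball of radius~$R$, where $|Du|=r/(n-1)$ on each level sphere and the boundary carries $H=1/R$, so that $g(R)=q(R,1/R)/R=\tfrac1{n-1}$ precisely matches the threshold appearing above; this confirms that the mean curvature entering~\eqref{degenerate} is the one for which a sphere of radius~$R$ has curvature $1/R$, and that the two chains of inequalities meet exactly when $\Omega$ is the ball.
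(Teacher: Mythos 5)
Your argument is correct and follows essentially the same route as the paper: the identity $(n-1)\,|Du|\,H=1$ on $\partial\Omega$, the tangent-sphere comparisons $H(P_1)\le 1/R_1$ and $H(P_2)\ge 1/R_2$ at the nearest and farthest boundary points, the monotonicity of $q(r,\cdot)$ to pass to $q(R_i,1/R_i)$, and the strict monotonicity of $q(r,1/r)/r$ to force $R_1=R_2$. The threshold $\tfrac1{n-1}$ you identify is exactly the constant $c_1$ of the paper, so the two proofs coincide.
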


\begin{example}
If the function $q$ has the special form $q(r,h) = r^\alpha \, h^\beta$, then $q(r, \, 1/r) \allowbreak = r^{\alpha-\beta}$ and the assumptions in the theorem are satisfied provided that $\alpha - 1 > \beta \ge 0$.
\end{example}

\begin{remark}\label{equivalence}
Several symmetry results were obtained in~\cite{G2} for overdetermined problems related to the equation $\Delta_\infty u = 0$. As mentioned in\/ \cite[Remark~2.2, p.~599]{AS}, ``there is no difference between the two resulting equations (in the viscosity sense) when the right-hand side $f \equiv 0$''. Hence all results in~\cite{G2} concerning solutions to $\Delta_\infty u = 0$ also hold for normalized infinity-harmonic functions, i.e., for solutions of\/ $\Delta_\infty^N u = 0$.
\end{remark}

The paper is organized as follows: in Section~\ref{vis} we define viscosity solutions, and in Section~\ref{preliminary} we recall some preliminary lemmas which will be used to prove our main results in Section~\ref{proof}.

\section{Viscosity Solutions}\label{vis}
Let $p \in [1,\infty]$. As usual (see for instance~\cite{BK1,KMP,MM1}), if $u\in C(\Omega)$ is twice differentiable at $x_0\in\Omega$ and if $Du(x_0) \ne 0$, we define the upper and lower normalized $p$-Laplacian of $u$ at $x_0$, respectively by $\Delta_p^+ \, u(x_0) = \Delta_p^- \, u(x_0) = \Delta_{p\,} u(x_0)$, where $\Delta_{p\,} u$ is given by~\eqref{normalized}. If, instead, $Du(x_0) = 0$, we denote by $\lambda_{\rm min} = \lambda_1 \le \ldots \le \lambda_n = \lambda_{\rm max}$ the eigenvalues of the Hessian matrix $D^2 u(x_0)$ and define
$$
\Delta_p^+ \, u(x_0) =
\begin{cases}
\frac{\, p - 1 \,}p \, \lambda_1
+
\frac1{\, p \,} \, \sum\limits_{i = 2}^n \lambda_i,
&p \in [1,2];
\\
\noalign{\medskip}
\frac{\, p - 1 \,}p \, \lambda_n
+
\frac1{\, p \,} \, \sum\limits_{i = 1}^{n - 1} \lambda_i,
&p \in (2,\infty);
\\
\noalign{\medskip}
\lambda_n, &p = \infty,
\end{cases}
$$%
and
$$
\Delta_p^- \, u(x_0) =
\begin{cases}
\frac{\, p - 1 \,}p \, \lambda_n
+
\frac1{\, p \,} \, \sum\limits_{i = 1}^{n - 1} \lambda_i,
&p \in [1,2];
\\
\noalign{\medskip}
\frac{\, p - 1 \,}p \, \lambda_1
+
\frac1{\, p \,} \, \sum\limits_{i = 2}^n \lambda_i,
&p \in (2,\infty);
\\
\noalign{\medskip}
\lambda_1, &p = \infty.
\end{cases}
$$%
In the case when $p < \infty$, the definitions above may equivalently be rewritten as follows (cf.~\cite[p.~177]{KMP}):
$$
\Delta_p^+ \, u(x_0) =
\begin{cases}
\frac{\, p - 2 \,}p \, \lambda_{\rm min}
+
\frac1{\, p \,} \, \Delta u,
&p \in [1,2];
\\
\noalign{\medskip}
\frac{\, p - 2 \,}p \, \lambda_{\rm max}
+
\frac1{\, p \,} \, \Delta u,
&p \in (2,\infty).
\end{cases}
$$%
and
$$
\Delta_p^- \, u(x_0) =
\begin{cases}
\frac{\, p - 2 \,}p \, \lambda_{\rm max}
+
\frac1{\, p \,} \, \Delta u,
&p \in [1,2];
\\
\noalign{\medskip}
\frac{\, p - 2 \,}p \, \lambda_{\rm min}
+
\frac1{\, p \,} \, \Delta u,
&p \in (2,\infty).
\end{cases}
$$%
For $f \colon \Omega \to \mathbb R$ we will give the definition of viscosity solution to the PDE
\begin{equation}\label{eq0.3}
-\Delta_p^N u(x)=f(x)\;\text{in}\;\Omega
\end{equation}
We denote by $USC(\Omega)$ and $LSC(\Omega)$, respectively, the spaces of \textit{upper semicontinuous} and \textit{lower semicontinuous} real-valued functions on~$\Omega$. Furthermore, for any $x_0 \in \Omega$ and $\varphi \in C^2$ in a neighborhood of~$x_0$ we write $u\prec_{x_0}\varphi$ (respectively, $u\succ_{x_0}\varphi$) if the difference $u - \varphi$ has a local maximum (minimum) at $x_0$. The notation extends in an obvious way to the case when $u$ is also defined at some $x_0 \in \partial \Omega$.

\medskip

\begin{definition}
~
\begin{enumerate}
\item $u\in USC(\Omega)$ is called a viscosity subsolution (or simply subsolution) of the PDE \eqref{eq0.3} in $\Omega$ if for every $x_0 \in \Omega$, and for every $\varphi\in C^2(\Omega)$ satisfying $u\prec_{x_0}\varphi$, we have
$$-\Delta_p^+\varphi(x_0)\leq f(x_0).$$
In this case we write $-\Delta_p^N u(x)\leq f(x)$ in $\Omega$.
\item $u\in LSC(\Omega)$ is called a viscosity supersolution (or simply supersolution) of the PDE \eqref{eq0.3} in $\Omega$ if for every $x_0 \in \Omega$, and for every $\varphi\in C^2(\Omega)$ satisfying $u\succ_{x_0}\varphi$, we have
$$-\Delta_p^-\varphi(x_0)\geq f(x_0).$$
In this case we write $-\Delta_p^N u(x)\geq f(x)$ in $\Omega.$
\item $u\in C(\Omega)$ is called viscosity solution (or simply solution) of the PDE \eqref{eq0.3} in $\Omega,$ if u is both a subsolution and a supersolution.
\end{enumerate}
\end{definition}
\noindent We now consider a boundary datum~$g(x)$ and define a viscosity solution of the Dirichlet problem 
\begin{equation}\label{eq0}
\left\{\begin{array}{rcll}
-\Delta_p^N u &=& f, & \mbox{in $\Omega$}\\
u &=& g,& \mbox{on $\partial\Omega$}
\end{array}\right.
\end{equation} as follows. We also give a meaning to the boundary condition~\eqref{Neumann} (see \cite[Remark~1.2]{BK1}).
\goodbreak\begin{definition}\label{viscosity}
~
\begin{enumerate}
\item $u\in USC(\overline\Omega)$ is a subsolution of \eqref{eq0} if $u$ is a subsolution of $-\Delta_p^N u = f$ in $\Omega$ and satisfies $u\leq g$ on $\partial\Omega.$
\item $u\in LSC(\overline\Omega)$ is a supersolution of \eqref{eq0} if $u$ is a supersolution of $-\Delta_p^N u = f$ in $\Omega$ and satisfies $u\geq g$ on $\partial\Omega.$
\item $u\in C(\overline\Omega)$ is a solution of \eqref{eq0} if $u$ is both a subsolution and supersolution of \eqref{eq0}.
\item A solution $u$ of~\eqref{eq0} satisfies the boundary condition~\eqref{Neumann} if for every $x_0 \in \partial \Omega$ and every $\varphi \in C^2$ in a neighborhood of~$x_0$ we have: if $u\prec_{x_0}\varphi$ then $-\frac{\partial \varphi}{\partial\nu} \ge q(|x_0|)$; if, instead, $u\succ_{x_0}\varphi$ then $-\frac{\partial \varphi}{\partial\nu} \le q(|x_0|)$.
\end{enumerate}
\end{definition}

\begin{remark}\label{negative}
(i)~A smooth function $u$ with $Du \ne 0$ in\/~$\Omega$ satisfying \eqref{eq0} in the classical sense is also a viscosity solution. (ii)~The normalized $p$-Laplacian is a nonlinear operator for $p \ne 2$. Nevertheless, if $u\in USC(\Omega)$ is a subsolution of~\eqref{eq0}, then $v = -u\in LSC(\Omega)$ and $v$ is a supersolution of the Dirichlet problem
 \begin{equation}\label{eq0.5}
\left\{\begin{array}{rcll}
-\Delta_p^N v &=& -f, & \mbox{in $\Omega$}\\
v &=& -g,& \mbox{on $\partial\Omega$}
\end{array}\right.
\end{equation}
Similarly, if $u\in LSC(\Omega)$ is a supersolution of~\eqref{eq0}, then $v = -u \in USC(\Omega)$ and $v$ is a subsolution of~\eqref{eq0.5}.
\end{remark}

\section{Well-posedness, comparison principle, radial solutions}\label{preliminary}

The proof of Theorem~\ref{main} is based on the comparison principle and the explicit expression of the radial solutions which are recalled in this section.

\begin{lemma}[Comparison principle]\label{lemma1}
Let $p \in [1,\infty]$, let\/ $\Omega\subset\mathbb R^n$ be a bounded (possibly disconnected) open set, and $f \in C(\Omega)$. We assume that $f \ne 0$ in $\Omega$ and does not change sign. Let $u,v\in C(\overline{\Omega})$ satisfy 
$$-\Delta_p^N u\leq f(x)
\quad
\mbox{and}
\quad
-\Delta_p^N v\geq f(x),
\quad
x \in \Omega.
$$ 
If $u\leq v$ on $\partial\Omega,$ then $u\leq v$ in $\Omega$. The result also holds if $p = \infty$ and $f \equiv 0$ in $\Omega$.
\end{lemma}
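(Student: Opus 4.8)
The plan is to argue by contradiction through the doubling-of-variables method, using the positive $1$-homogeneity of $\Delta_p^N$ together with the fixed sign of $f$ to manufacture the strict inequality that a comparison principle for an operator with no zeroth-order term necessarily requires. First I would reduce to the case $f>0$: by Remark~\ref{negative}, replacing $(u,v)$ with $(-v,-u)$ turns a subsolution/supersolution pair for $f$ into one for $-f$, so the two sign cases are interchangeable, and $f$ is then locally bounded below by a positive constant on every compact subset of $\Omega$. Suppose, contrary to the claim, that $\max_{\overline\Omega}(u-v)=M>0$; since $u\le v$ on $\partial\Omega$, this maximum is attained at an interior point.

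The next step exploits homogeneity to turn $v$ into a \emph{strict} supersolution without disturbing the boundary ordering. Setting $m=\min_{\overline\Omega}v$ and, for $\theta>1$, $v^\theta=\theta(v-m)+m$, the $1$-homogeneity of $\Delta_p^N$ in \eqref{normalized} gives $-\Delta_p^N v^\theta\ge\theta f$, while $v^\theta\ge v$ yields $u\le v\le v^\theta$ on $\partial\Omega$; and since $v^\theta\to v$ uniformly as $\theta\to1^+$, the interior maximum of $u-v^\theta$ stays positive for $\theta$ close to $1$. I would then maximize $\Phi_\varepsilon(x,y)=u(x)-v^\theta(y)-\tfrac1{2\varepsilon}|x-y|^2$ over $\overline\Omega\times\overline\Omega$. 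The standard penalization estimates place the maximizers $x_\varepsilon,y_\varepsilon$ in the interior with $x_\varepsilon,y_\varepsilon\to\hat x$ and $|x_\varepsilon-y_\varepsilon|^2/\varepsilon\to0$, and the Crandall--Ishii theorem on sums furnishes symmetric matrices $X_\varepsilon\le Y_\varepsilon$ with $(p_\varepsilon,X_\varepsilon)\in\overline J^{2,+}u(x_\varepsilon)$ and $(p_\varepsilon,Y_\varepsilon)\in\overline J^{2,-}v^\theta(y_\varepsilon)$, where $p_\varepsilon=(x_\varepsilon-y_\varepsilon)/\varepsilon$.

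On the set where $p_\varepsilon\ne0$ the argument closes routinely: there $\Delta_p^N(p_\varepsilon,M)=\tfrac1p\,\mathrm{tr}\!\big(A_p(p_\varepsilon)M\big)$ with $A_p(q)=I+(p-2)\,q\otimes q/|q|^2$ positive semidefinite for $p\ge1$ (and $A_\infty(q)=q\otimes q/|q|^2$), so $X_\varepsilon\le Y_\varepsilon$ forces $\Delta_p^N(p_\varepsilon,X_\varepsilon)\le\Delta_p^N(p_\varepsilon,Y_\varepsilon)$. Feeding in the subsolution inequality $-\Delta_p^N(p_\varepsilon,X_\varepsilon)\le f(x_\varepsilon)$ and the strict supersolution inequality $-\Delta_p^N(p_\varepsilon,Y_\varepsilon)\ge\theta f(y_\varepsilon)$ and letting $\varepsilon\to0$ gives $0\ge(\theta-1)\,f(\hat x)>0$, a contradiction.

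The main obstacle is the complementary case $p_\varepsilon=0$, i.e. $x_\varepsilon=y_\varepsilon$ with a vanishing common gradient, where $\Delta_p^N$ is defined only through the envelopes $\Delta_p^\pm$ built from the extreme eigenvalues of the Hessian. The trouble is that $X_\varepsilon\le Y_\varepsilon$ alone does \emph{not} give $\Delta_p^+(0,X_\varepsilon)\le\Delta_p^-(0,Y_\varepsilon)$: the two envelopes select opposite extreme eigenvalues (for $p=1$, for instance, $\Delta_1^+(0,X)=\mathrm{tr}X-\lambda_{\min}(X)$ while $\Delta_1^-(0,Y)=\mathrm{tr}Y-\lambda_{\max}(Y)$), so the degenerate-ellipticity step breaks down and the local jet comparison does not close. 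This is precisely the singularity of the normalized $p$-Laplacian at critical points, and it is the reason the hypothesis on the sign of $f$ cannot be dropped. To handle it I would abandon the purely local analysis at such points and use the fixed sign of $f$ globally, comparing $u$ against explicit radial fundamental supersolutions in the spirit of Lu--Wang for $\Delta_\infty$ and of Banerjee--Kawohl \cite{BK1} for $\Delta_p^N$: a strict subsolution of $-\Delta_p^N u\le f$ with $f>0$ cannot be trapped beneath such a radial barrier with a flat (zero-gradient) contact, which either relocates the contact to a point with $p_\varepsilon\ne0$ or yields the contradiction directly. Finally, the borderline case $p=\infty$, $f\equiv0$ loses the homogeneity device of the second step but is exactly Jensen's comparison principle for $\infty$-harmonic functions, which I would invoke directly.
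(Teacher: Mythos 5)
Your overall architecture is sound and, in fact, reconstructs the strategy behind the result the paper actually invokes: the paper does not prove this lemma from scratch but cites \cite[Theorem~5]{KMP} for the case of one-signed $f$ (together with Remark~\ref{negative}\,(ii) to swap the two sign cases, exactly as you do) and Jensen's theorem \cite{Jensen} for $p=\infty$, $f\equiv0$. Your reduction to $f>0$, the $1$-homogeneity trick $v^\theta=\theta(v-m)+m$ producing a strict supersolution, the doubling of variables with the theorem on sums, and the closure of the argument when $p_\varepsilon\ne0$ via positive semidefiniteness of $I+(p-2)\,q\otimes q/|q|^2$ are all correct.

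The gap is in the one case that makes this lemma nontrivial, namely $x_\varepsilon=y_\varepsilon$ with $p_\varepsilon=0$. You correctly diagnose that $X_\varepsilon\le Y_\varepsilon$ does not order $\Delta_p^+(0,X_\varepsilon)$ against $\Delta_p^-(0,Y_\varepsilon)$, but your proposed repair --- comparison with ``radial fundamental supersolutions'' that ``either relocates the contact to a point with $p_\varepsilon\ne0$ or yields the contradiction directly'' --- is not an argument: nothing is said about how such a barrier interacts with the maximizers of $\Phi_\varepsilon$, why a flat contact is impossible, or where the contact would move. The difficulty is real only because you chose the quadratic penalization $\tfrac1{2\varepsilon}|x-y|^2$, whose Hessian $\tfrac1\varepsilon I$ does not vanish on the diagonal. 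The standard resolution, and the one underlying \cite[Theorem~5]{KMP}, is to penalize instead with $\tfrac1{q\varepsilon}|x-y|^{q}$ for some $q>2$. Then at a diagonal maximizer both the gradient and the Hessian of the test function vanish, so the subsolution inequality reads $-\Delta_p^+(0,0)=0\le f(x_\varepsilon)$ and the (strict) supersolution inequality reads $0\ge\theta f(y_\varepsilon)$; since $f>0$ the latter is an immediate contradiction, so the diagonal case simply cannot occur and the off-diagonal computation you already carried out finishes the proof. This is precisely where the hypothesis that $f$ is nonvanishing of one sign enters; with that substitution your argument closes. Your treatment of $p=\infty$, $f\equiv0$ by direct appeal to Jensen coincides with the paper's.
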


\begin{proof}
The first claim follows from \cite[Theorem~5]{KMP}, taking Remark~\ref{negative} $(ii)$ into account. The case when $p = \infty$ and $f \equiv 0$ follows from Jensen's fundamental result \cite[Theorem~3.11]{Jensen} by virtue of the equivalence between infinity-harmonicity and normalized infinity-harmonicity (Remark~\ref{equivalence}). It is also a special case of \cite[Theorem 2.5]{MM3}.
\end{proof}

Uniqueness for problem~\eqref{eq0} is a consequence of the comparison principle stated above. Recall that uniqueness lacks in the case when $p = 1$ and $f \equiv 0$, and a famous example was given in~\cite[Section~3.6]{SZ} (see Fig.~\ref{counterexample} below and~\cite[Fig.~2]{K1}).
\begin{figure}[h]
\centering
\subfloat[A least-gradient function.]{%
\resizebox*{5.1cm}{!}{\includegraphics{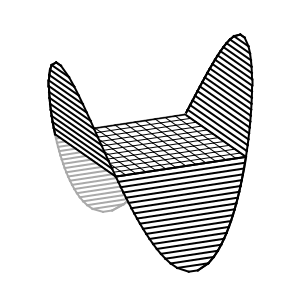}}}\hspace{20pt}
\subfloat[Another solution of $\Delta_1^N u = 0$.]{%
\resizebox*{5.1cm}{!}{\includegraphics{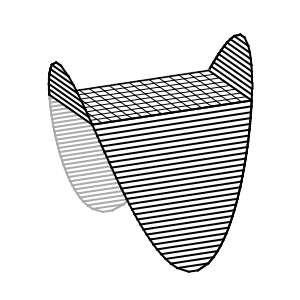}}}
\caption{Sternberg \& Ziemer's example.} \label{counterexample}
\end{figure}

\begin{lemma}[Radial solution]\label{lemma3}
Let $p \in [1,\infty]$ and let $c_p$ be as in~\eqref{c_p}. For every $R > 0$ the function
\begin{equation}\label{u_R}
u_R(x)=\dfrac{\, c_p \,}{2} \, \left(R^2-|x-\bar x|^2\right)
\end{equation}
is the unique solution of the problem
\begin{equation}\label{eq1.11}
\left\{\begin{array}{rcll}
-\Delta_p^N u &=& 1, & x\in B(\bar x,R)\\
\noalign{\medskip}
u &=& 0, & x\in \partial B(\bar x,R)
\end{array}\right.
\end{equation}
Moreover $-\dfrac{\partial u_R}{\partial\nu}=c_p \, R$, 
where $\nu = R^{-1} \, (x - \bar x)$ is the outer normal at $x \in \partial B(\bar x, R)$.
\end{lemma}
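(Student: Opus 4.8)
The plan is to verify the claim by direct computation, which is the natural approach since the candidate solution $u_R$ is given explicitly. The statement has three parts: (i) $u_R$ solves the boundary-value problem \eqref{eq1.11}; (ii) it is the \emph{unique} such solution; and (iii) the stated formula for the outer normal derivative holds. I would handle these in order, since uniqueness will rely on the comparison principle (Lemma~\ref{lemma1}), and the normal-derivative computation is a short afterthought.

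For existence, I would first compute the derivatives of $u_R$. Writing $r = |x - \bar x|$, one has $Du_R = -c_p\,(x - \bar x)$, so that $Du_R \ne 0$ everywhere except at the single point $x = \bar x$, and the Hessian is $D^2 u_R = -c_p\,I$. Since $u_R$ is a polynomial (hence smooth) and its gradient vanishes only at the center, I would treat the interior point $\bar x$ separately from the rest. Away from $\bar x$, where $Du_R \ne 0$, I would plug into the classical formula \eqref{normalized}. For finite $p$ this requires evaluating $\tfrac1p\,|Du_R|^{2-p}\,\mathrm{div}\bigl(|Du_R|^{p-2}\,Du_R\bigr)$; using $|Du_R| = c_p\,r$ and the radial structure, the divergence expands into a term from differentiating $|Du_R|^{p-2}$ and a term from $\mathrm{div}(Du_R) = \Delta u_R = -n\,c_p$, and after simplification the $r$-dependence cancels and one obtains exactly $-\Delta_p^N u_R = (p + n - 2)\,c_p / p = 1$ by the definition \eqref{c_p} of $c_p$. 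For $p = \infty$ the computation is even shorter: $\langle D^2 u_R\,Du_R, Du_R\rangle = -c_p\,|Du_R|^2$, so $\Delta_\infty^N u_R = -c_p = -1$, again matching. By Remark~\ref{negative}(i) this classical solution is a viscosity solution on $B(\bar x, R) \setminus \{\bar x\}$, and at the isolated point $\bar x$ I would check the definition directly using the eigenvalue formulas for $\Delta_p^{\pm}$ from Section~\ref{vis}: since all eigenvalues of $D^2 u_R(\bar x)$ equal $-c_p$, every weighted combination appearing in $\Delta_p^{\pm}$ collapses to $-c_p$, so the subsolution and supersolution inequalities both hold with value $1$. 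The Dirichlet condition $u_R = 0$ on $\partial B(\bar x, R)$ is immediate since $|x - \bar x| = R$ there.

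Uniqueness follows from Lemma~\ref{lemma1} applied with $f \equiv 1$ (which is nonzero and of constant sign, so the hypotheses are met). If $u$ and $v$ are two solutions of \eqref{eq1.11}, then $u$ is a subsolution and $v$ a supersolution with the same boundary data $u = v = 0$ on $\partial B(\bar x, R)$; the comparison principle gives $u \le v$, and by symmetry $v \le u$, hence $u = v$. Finally, the normal derivative is computed directly: on $\partial B(\bar x, R)$ the outer normal is $\nu = R^{-1}(x - \bar x)$, so $\dfrac{\partial u_R}{\partial \nu} = \langle Du_R, \nu\rangle = -c_p\,\langle x - \bar x,\, R^{-1}(x - \bar x)\rangle = -c_p\,R$, whence $-\dfrac{\partial u_R}{\partial \nu} = c_p\,R$.

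I expect the main obstacle to be the treatment of the single interior point $x = \bar x$ where $Du_R = 0$, since there the classical operator is undefined and one must instead invoke the viscosity definitions with the $\Delta_p^{\pm}$ eigenvalue expressions. The saving grace is that $D^2 u_R$ is a scalar multiple of the identity, so its spectrum is degenerate and every case in the piecewise definitions of $\Delta_p^{+}$ and $\Delta_p^{-}$ reduces to the same value $-c_p\,n \cdot (\text{appropriate weight})$, making the verification routine rather than delicate. The bulk computations for finite $p$ in \eqref{normalized} are straightforward once the radial symmetry is exploited, and the only genuine care needed is checking that the algebraic simplification indeed produces the constant $1$ via the precise form of $c_p$.
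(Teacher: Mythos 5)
Your proposal is correct and follows essentially the same route as the paper: direct verification of the equation (classically where $Du_R \ne 0$, via the eigenvalue formulas for $\Delta_p^{\pm}$ at the center where all eigenvalues equal $-c_p$), uniqueness from the comparison principle of Lemma~\ref{lemma1} with $f \equiv 1$, and a one-line computation of the normal derivative. The only difference is that the paper carries out this computation explicitly only for $p=1$ and cites the literature for $p \in (1,\infty)$ and $p=\infty$, whereas you verify all cases directly; your algebra (the cancellation yielding $\frac{(p+n-2)\,c_p}{p}=1$) checks out.
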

\begin{proof}
The solution is unique by Lemma~\ref{lemma1}. For $p \in (1,\infty)$, the representation~\eqref{u_R} is found in \cite[p.~20]{K1}. If $p = \infty$, the result follows from \cite[pag.~243]{BK2} with the observation that $d(x,\partial\Omega) = R - |x - \bar x|$ and by letting $a = R$. We give details for the case when $p = 1$, with reference to Definition~\ref{viscosity} and Remark~\ref{negative}~$(i)$. By differentiation of~\eqref{u_R}, $Du_R(x) = -\frac1{n - 1}(x-\bar x)$ and $D^2u_R(x) = -\frac1{n - 1} \, I$, where $I$ denotes the identity matrix. Hence $\lambda_{\rm min} = \lambda_{\rm max} = -\frac1{n-1}$. If $x \ne \bar x$, using~\eqref{normalized} we see that the equation is satisfied in the classical sense. Otherwise $Du_R(\bar x)=0$ and we have
$\Delta_1^+ \, u_R(\bar x) = \Delta_1^- \, u_R(\bar x) = \Delta u_R - \lambda_{\rm min} = -1$. Thus, $-\Delta_1^N \, u_R(x)=1$ in all of $B(\bar x,R)$ and, of course, $u_R(x)=0$ on $\partial B(\bar x,R)$.
\end{proof}

In order to prove Theorem~\ref{main}, we also need to establish the positivity of the solution to the following Dirichlet problem:
\begin{equation}\label{eq2}
\left\{\begin{array}{rcll}
-\Delta_p^N u &=& 1, & x\in\Omega\\
u &=& 0, & x\in \partial \Omega\\
\end{array}\right.
\end{equation}

\begin{lemma}\label{lemma4}
Let $p \in [1,\infty]$. Any solution of \eqref{eq2} is positive in\/~$\Omega$.
\end{lemma}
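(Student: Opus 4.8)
The plan is to prove nonnegativity first and then upgrade it to strict positivity by a barrier argument, in both cases invoking the comparison principle of Lemma~\ref{lemma1} with $f \equiv 1$, which is nonzero and of constant sign so that the lemma applies.

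First I would show $u \ge 0$ in $\Omega$ by comparing $u$ with the constant function $0$. The point is that $0$ is a \emph{subsolution} of $-\Delta_p^N w = 1$: at every $x_0$, a test function $\varphi$ with $0 \prec_{x_0} \varphi$ satisfies $D\varphi(x_0) = 0$ and $D^2\varphi(x_0) \ge 0$, so all eigenvalues of $D^2\varphi(x_0)$ are nonnegative and, inspecting each case in the definition of $\Delta_p^+$, one gets $\Delta_p^+\varphi(x_0) \ge 0$, whence $-\Delta_p^+\varphi(x_0) \le 0 \le 1$. Since $u$ is a solution it is in particular a supersolution, i.e.\ $-\Delta_p^N u \ge 1$, and $0 = u$ on $\partial\Omega$; Lemma~\ref{lemma1} applied to the pair $(0, u)$ then yields $0 \le u$ in $\Omega$.

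Next I would obtain strict positivity by a ball comparison using the explicit radial solution of Lemma~\ref{lemma3}. Fix $z \in \Omega$ and choose $R > 0$ with $\overline{B(z,R)} \subset \Omega$, which is possible because $\Omega$ is open. On $B(z,R)$ consider the translated radial solution $w(x) = \frac{c_p}{2}\bigl(R^2 - |x - z|^2\bigr)$, which by Lemma~\ref{lemma3} solves $-\Delta_p^N w = 1$ in $B(z,R)$ with $w = 0$ on $\partial B(z,R)$; in particular $w$ is a subsolution there. By the previous step $u \ge 0 = w$ on $\partial B(z,R)$, while $u$ is a supersolution of $-\Delta_p^N u = 1$ on $B(z,R)$. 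Applying Lemma~\ref{lemma1} on the ball $B(z,R)$ gives $w \le u$ there, and evaluating at the center yields $u(z) \ge w(z) = \frac{c_p}{2}R^2 > 0$, since $c_p > 0$. As $z \in \Omega$ was arbitrary, $u > 0$ in $\Omega$.

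The routine parts are the continuity and boundary-value hypotheses of Lemma~\ref{lemma1}, which hold because $u \in C(\overline\Omega)$ and $w$ is smooth. The step requiring the most care is verifying that the constant $0$ is a subsolution of $-\Delta_p^N w = 1$: since its gradient vanishes identically one cannot use the smooth formula~\eqref{normalized} and must instead argue through the eigenvalue definitions of $\Delta_p^\pm$ at a vanishing-gradient point, checking the sign in each of the ranges $p \in [1,2]$, $p \in (2,\infty)$ and $p = \infty$. Once this is in place, the barrier comparison at an arbitrary interior point is the essential mechanism that turns $u \ge 0$ into the quantitative bound $u(z) \ge \frac{c_p}{2}R^2$.
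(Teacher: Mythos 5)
Your proposal is correct and follows essentially the same route as the paper: first compare $u$ with the constant $0$ (a subsolution) via Lemma~\ref{lemma1} to get $u\ge 0$, then compare with the translated radial solution of Lemma~\ref{lemma3} on interior balls to upgrade this to strict positivity. The only difference is that you spell out the verification that $0$ is a viscosity subsolution through the eigenvalue definition of $\Delta_p^+$ at a vanishing-gradient point, which the paper leaves implicit; this check is carried out correctly.
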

\begin{proof}
Notice that $v=0$ satisfies $-\Delta_p^N v\leq 1$ in $\Omega$ and $v=0$ on $\partial\Omega$. As $v$ is a subsolution, by the comparison principle (Lemma \ref{lemma1}) we have $0\leq u$ in $\Omega$. Consider $B(x_0,R)$ contained in $\Omega$, and consider the solution $u_R$ of problem \eqref{eq1.11} in $B(x_0,R)$. Since $u_R(x)=0$ on $\partial B(x_0,R)$ and $u\geq 0$ on $\partial B(x_0,R)$, Lemma \ref{lemma1} implies $0<u_R(x)\leq u(x)$ in $B(x_0,R)$. Since $x_0$ is arbitrary in $\Omega,$ we have $u>0$ in~$\Omega$.
\end{proof}

By choosing $B(x_0,R) \subset \Omega$ so that $\partial B(x_0,R) \cap \partial \Omega \ne \emptyset$, we immediately obtain the following boundary-point lemma (see also \cite[Lemma~2.3]{BK1}):

\begin{lemma}[Hopf]
Let $p \in [1,\infty]$. Suppose\/ $\Omega$ satisfies an interior sphere condition at every boundary point, and let $u\in C(\overline{\Omega})$ be a viscosity solution of \eqref{eq2}. Then for all $x\in\partial\Omega$ we have
$$\limsup_{t\rightarrow 0^{+}}\dfrac{u(x)-u(x-\nu t)}{t}<0.$$
\end{lemma}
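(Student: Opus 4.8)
The plan is to use the explicit radial solution of Lemma~\ref{lemma3} as a barrier from below, exactly along the lines suggested in the remark preceding the statement. First I would fix $x \in \partial\Omega$ and invoke the interior sphere condition to produce a ball $B(x_0,R)\subset\Omega$ with $x\in\partial B(x_0,R)$. Because this sphere is internally tangent to $\partial\Omega$ at $x$, the outer unit normal $\nu$ to $\Omega$ at $x$ agrees with the outer unit normal $R^{-1}(x-x_0)$ to the ball at $x$; this is precisely the $\nu$ that appears in the difference quotient, and it is what will let me transfer information from the ball to $\Omega$.

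Next I would compare $u$ on $B(x_0,R)$ with the translated radial solution $u_R(y)=\frac{c_p}{2}\bigl(R^2-|y-x_0|^2\bigr)$ furnished by Lemma~\ref{lemma3}, which satisfies $-\Delta_p^N u_R=1$ in the ball and vanishes on $\partial B(x_0,R)$. By Lemma~\ref{lemma4} we have $u\ge 0$ in $\overline\Omega$, so $u_R=0\le u$ on $\partial B(x_0,R)$. Applying the comparison principle (Lemma~\ref{lemma1}, with the admissible right-hand side $f\equiv 1$, which is nonzero and of one sign) to the subsolution $u_R$ and the supersolution $u$ yields $u_R\le u$ throughout $B(x_0,R)$. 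Moreover $u(x)=0=u_R(x)$, since $x\in\partial\Omega\cap\partial B(x_0,R)$.

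Finally, for small $t>0$ the point $x-\nu t$ lies inside $B(x_0,R)$, and using $u(x)=u_R(x)=0$ together with $u\ge u_R$ I would estimate
$$
\frac{u(x)-u(x-\nu t)}{t}
= -\frac{u(x-\nu t)}{t}
\le -\frac{u_R(x-\nu t)}{t}
= \frac{u_R(x)-u_R(x-\nu t)}{t}.
$$
Since $u_R$ is smooth, the right-hand side converges as $t\to 0^+$ to the directional derivative $Du_R(x)\cdot\nu = \frac{\partial u_R}{\partial\nu}(x) = -c_p R$, which is strictly negative because $c_p>0$ by~\eqref{c_p} and $R>0$. Passing to the $\limsup$ then gives the claimed strict inequality, indeed with the explicit bound $-c_p R$.

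I do not expect a serious obstacle: once the barrier $u_R$ is in place the argument is immediate. The only points demanding care are orienting the comparison correctly, so that $u$ plays the role of the supersolution dominating the barrier $u_R$ from above, and verifying that the internally tangent sphere shares the outer normal $\nu$ with $\partial\Omega$ at $x$. This matching is what guarantees that the one-sided difference quotient of $u$ is controlled by the normal derivative of the explicit solution $u_R$, whose strict negativity $-c_p R$ is exactly what produces the strict inequality in the limit.
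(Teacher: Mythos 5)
Your argument is correct and is exactly the one the paper intends: the paper states the lemma is ``immediately'' obtained by choosing $B(x_0,R)\subset\Omega$ with $\partial B(x_0,R)\cap\partial\Omega\ne\emptyset$, i.e.\ by the very barrier comparison with $u_R$ from Lemmas~\ref{lemma3} and~\ref{lemma4} that you carry out in detail. Your write-up just makes the tangency of normals and the limit computation $\partial u_R/\partial\nu=-c_pR<0$ explicit, which is a faithful expansion of the paper's one-line proof.
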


We conclude this section by quoting some existence and regularity results.

\begin{lemma}[Existence]\label{lemma0}
Let\/ $\Omega\subset\mathbb{R}^{n}$ be a bounded (possibly disconnected) open set, and let $f\in C(\Omega)$ and $g\in C(\partial\Omega)$. The Dirichlet problem \eqref{eq0} has a viscosity solution provided that one of the following conditions hold:
\begin{enumerate}
\item $p \in (n,\infty]$, and\/ $f$ does not vanish and does not change sign in\/ $\Omega$.
\item $p = \infty$ and $f$ is bounded in\/ $\Omega$.
\end{enumerate}
\end{lemma}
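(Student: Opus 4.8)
The plan is to prove existence by Perron's method, using the explicit radial functions of Lemma~\ref{lemma3} both as global barriers and as the building blocks of local boundary barriers, and then invoking the comparison principle of Lemma~\ref{lemma1} to upgrade the resulting Perron envelope to a genuine solution attaining the datum~$g$. Set $M=\sup_{\Omega}|f|$, finite under either hypothesis, and fix a ball $B(\bar x,\rho)\supset\overline\Omega$. By Lemma~\ref{lemma3} (which already handles the critical point of the paraboloid) and the $1$-homogeneity of $\Delta_p^N$, the function $V(x)=\frac{c_p M}{2}\,(\rho^2-|x-\bar x|^2)+\|g\|_{L^\infty(\partial\Omega)}$, with $c_p$ as in~\eqref{c_p}, is a supersolution of~\eqref{eq0}, since $-\Delta_p^N V=M\ge f$ in $\Omega$ and $V\ge g$ on $\partial\Omega$; the mirror function $W_0(x)=-\frac{c_p M}{2}\,(\rho^2-|x-\bar x|^2)-\|g\|_{L^\infty(\partial\Omega)}$ is a subsolution. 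Hence the Perron class $\mathcal S=\{w:\ w\text{ a subsolution of~\eqref{eq0},}\ w\le V\}$ is nonempty and bounded above, and the standard Perron arguments give that its envelope $W=\sup_{w\in\mathcal S}w$ has $W^*$ a subsolution and $W_*$ a supersolution of $-\Delta_p^N u=f$ in~$\Omega$.

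The crux, and the place where the hypothesis $p>n$ is used, is the construction of boundary barriers. Writing $\gamma=\frac{p-n}{p-1}$ for finite $p$ and $\gamma=1$ for $p=\infty$, one checks from the decomposition $\Delta_p^N=\frac{p-1}{p}\Delta_\infty^N+\frac1p\Delta_1^N$ that the radial function $r^\gamma$ is normalized $p$-harmonic and that, for every $C>0$,
\[
\phi(x)=C\,|x-x_0|^{\gamma}-\tfrac{c_p M}{2}\,|x-x_0|^{2}
\]
solves $-\Delta_p^N\phi=M$ wherever its (radial, nonvanishing) gradient is defined. Because $p>n$ forces $\gamma\in(0,1]$, the function $\phi$ is continuous at $x_0$ with $\phi(x_0)=0$ and $\phi\ge0$ near $x_0$. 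Given $\varepsilon>0$, choose $\delta>0$ with $|g-g(x_0)|<\varepsilon$ on $\partial\Omega\cap B(x_0,\delta)$, then $C$ so large that $\phi'>0$ on $(0,\delta]$ and $g(x_0)+\varepsilon+\phi\ge V$ on $\Omega\cap\partial B(x_0,\delta)$. Gluing, the function equal to $\min\{V,\ g(x_0)+\varepsilon+\phi\}$ on $\Omega\cap B(x_0,\delta)$ and to $V$ elsewhere is a global supersolution $\Psi\ge g$ on $\partial\Omega$ with $\Psi(x_0)\le g(x_0)+\varepsilon$.

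In case~(1), $f$ is sign-definite and nonvanishing, so Lemma~\ref{lemma1} applies: comparing every $w\in\mathcal S$ with $\Psi$ gives $W\le\Psi$, hence $W^*(x_0)\le g(x_0)+\varepsilon$, and letting $\varepsilon\to0$ yields $W^*(x_0)\le g(x_0)$; the symmetric construction (Remark~\ref{negative}(ii)) gives a lower barrier and $W_*(x_0)\ge g(x_0)$. Finally, a second application of Lemma~\ref{lemma1} in $\Omega$ gives $W^*\le W_*$, so that $W=W^*=W_*\in C(\overline\Omega)$, $W=g$ on $\partial\Omega$, and $W$ solves~\eqref{eq0}.

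The main obstacle is case~(2): for $p=\infty$ and a bounded $f$ that may vanish or change sign, the comparison principle of Lemma~\ref{lemma1} is available only when $f\equiv0$, and it underlies \emph{both} the barrier step and the final identification $W^*=W_*$ above. When $f\equiv0$ the Perron scheme therefore closes verbatim; for a general bounded $f$, however, one cannot invoke Lemma~\ref{lemma1}, nor reduce to finite $p$ (where the sign condition reappears). I would instead obtain a solution by approximation, regularizing $f$ and $\Omega$ and appealing to the existence theory for the inhomogeneous normalized infinity-Laplacian together with the stability of viscosity solutions under uniform convergence, the barriers above ensuring that the boundary datum is still attained in the limit.
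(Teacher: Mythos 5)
Your route is genuinely different from the paper's: the paper proves this lemma purely by citation (claim (1) for $p=\infty$ from Lu--Wang, its extension to $p>n$ from K\"uhn's thesis, and claim (2) from Mebrate--Mohammed), whereas you attempt a self-contained Perron construction. For case (1) your outline is essentially the strategy those references implement, and your identification of the barrier exponent $\gamma=\frac{p-n}{p-1}$ is exactly right: a radial computation gives $\Delta_p^N\psi(r)=\frac{p-1}{p}\psi''+\frac{n-1}{pr}\psi'$ away from critical points, so $r^\gamma$ is normalized $p$-harmonic and is a continuous, increasing barrier precisely when $p>n$ --- this is the content of the paper's remark that ``assumption (3.19) of K\"uhn reduces to $p>n$.'' Still, even in case (1) there are gaps in rigor: Lemma~\ref{lemma1} is stated only for $u,v\in C(\overline\Omega)$, while your comparisons involve the merely upper/lower semicontinuous envelopes $W^*$ and $W_*$ and the USC elements of $\mathcal S$, so you need (and should state) a comparison principle for semicontinuous sub/supersolutions; and the ``standard Perron arguments'' are not automatic here, since $\Delta_p^{\pm}$ is a discontinuous operator at critical points of the test function and the bump construction must be checked against that.

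The decisive gap is case (2). You correctly observe that for $p=\infty$ and a bounded $f$ that vanishes or changes sign the comparison principle is unavailable (indeed uniqueness can fail), so the Perron scheme cannot be closed; but your proposed remedy --- regularize and ``appeal to the existence theory for the inhomogeneous normalized infinity-Laplacian together with stability'' --- is circular, because that existence theory \emph{is} claim (2). Stability of viscosity solutions under uniform convergence lets you pass to the limit in the equation only if you already possess solutions of the approximating problems and uniform control on them; replacing $f$ by sign-definite approximations such as $f\pm\varepsilon$ does not produce a sequence converging to the right problem with controlled boundary behavior without further argument. To close this case you would have to reproduce the actual mechanism of \cite[Theorem~6.1]{MM1} (a Perron argument that circumvents comparison for the identification step), not merely invoke it as a black box while claiming an independent proof.
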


\goodbreak
\begin{proof}
If $p = \infty$, Claim (1) follows from \cite[Theorem 1.8]{LW1} using Remark~\ref{negative} \textit{(ii)}. The claim was extended to $p > n$ in~\cite[Corollary 4.5]{K2}: indeed, assumption (3.19) of~\cite{K2} reduces to $p > n$. Claim (2) is a special case of \cite[Theorem 6.1]{MM1} corresponding to $F(x) = |x|$.
\end{proof}

\begin{remark}
Concerning regularity, global $C^{1,\beta}$-regularity is proved in \cite[Theorem~4.2]{BK1} for $p \in (1,\infty)$. In the case when $p = \infty$ it is known that the viscosity solution to~\eqref{eq2} is locally Lipschitz continuous in $\Omega$: see, for instance, \cite[Lemma~5.3]{WH1} with $F(x) = |x|$. See also \cite{CF} for further details.
\end{remark}

Let us point out that the existence, uniqueness and regularity results recalled above allow to construct the following counterexample, which mimics the one in \cite[p.~242]{G2}. The example is valid for $p \in (n,\infty)$ and shows that if we let the function $q$ in~\eqref{eq1} be arbitrary, i.e., if we drop every assumption on~$q$, then problem~\eqref{eq1} may well be solvable even though the domain~$\Omega$ is not a ball.

\begin{example}
Let\/ $\Omega \subset \mathbb R^2$ be an ellipse in canonical position, with semi-axes $a < b$, and let $p \in (n,\infty)$. Thus, there exists a unique solution $u = u_0$ of the Dirichlet problem~\eqref{eq2}. Note that the problem is invariant under reflection with respect to each axis: i.e., if we define $v(x_1,x_2) = u_0(\pm x_1, \pm x_2)$ for whatever choice of the signs $\pm$, we always find $\Delta_p^N v = \Delta_p^N u_0$. But since the solution of problem~\eqref{eq2} is unique, we must have $u_0 \equiv v$, hence $u_0(x_1,x_2) = u_0(\pm x_1, \pm x_2)$. Furthermore, since $u_0$ is differentiable up to the boundary, the last equality implies that\/ $|\nabla u_0(x_1,x_2)| = |\nabla u_0(\pm x_1, \pm x_2)|$ for every $(x_1,x_2) \in \partial \Omega$. Now observe that for every $r \in [a,b]$ the set $F_r$ of all $x \in \partial \Omega$ such that $|x| = r$ is invariant under reflection with respect to each axis, and therefore it is legitimate to define $q(r) = |\nabla u_0(x)|$ by choosing any $x = (x_1,x_2) \in F_r$ (because the value of $q(r)$ is independent of the choice of $x \in F_r$). Then, with this particular function $q$, problem~\eqref{eq2} is solvable (and has the solution~$u_0$) although\/ $\Omega$ is not a disc.
\end{example}

\section{Existence and nonexistence of solutions}\label{proof}
In this section we prove our main results.

\begin{proof}[Proof of Theorem~\ref{main}]
We follow the same guidelines as in \cite{G2}.
(1)\;Let $\Omega=B(\bar x,R)$, where $R$ is the solution of $q(r)- c_p \, r=0$. By Lemma~\ref{lemma3}, $u_R(x)=\frac{\, c_p \,}{2} \, (R^2-|x-\bar x|^2)$ is the solution to the problem \eqref{eq1} in $B(\bar x,R)$. Therefore, the solution to \eqref{eq1} exists in $B(\bar x,R)$. \\
On the other hand, assume $u$ is the solution to \eqref{eq1}. Define $u_{i}(x)=\frac{\, c_p \,}{2} \, ( R_i^2 - |x-\bar x|^2)$ for $i=1,2$. Then $u_{i}$ is the solution to \eqref{eq2} in the ball $B(\bar x,R_{i})$. Since $u\geq 0$ on $\partial B(\bar x,R_1)$ (see Lemma~\ref{lemma4}) and $u_1=0$ on $\partial B(\bar x,R_1),$ we have $u_1\leq u$ on $\partial B(\bar x,R_1)$. By Lemma~\ref{lemma1}, $u_1\leq u$ in $B(\bar x,R_1)$. 
Since $u_2\geq 0$ on $\partial\Omega$ and $u=0$ on $\partial\Omega,$ we have $u=0\leq u_2$ on $\partial\Omega$ and hence $u\leq u_2$ in $\Omega$ by Lemma~\ref{lemma1}. Let $P_1\in\partial B(\bar x,R_1)\cap\partial\Omega$. Then the outer normal $\nu$ to $\partial\Omega$ at $P_1$ equals $\dfrac{P_1-\bar x}{R_1}$, the outer normal to $B(\bar x,R_1)$. Since $u_1$ is a smooth function satisfying $u \succ_{P_1} u_1$, by Definition~\ref{viscosity}~(4) we may write
\begin{equation}\label{eq3}
c_p \, R_1=-\dfrac{\partial u_1}{\partial\nu}(P_1)\leq q(R_1).
\end{equation} 
Let $P_2\in\partial B(\bar x,R_2)\cap\partial\Omega$. Then the outer normal $\nu$ to $\partial\Omega$ at $P_2$ equals $\dfrac{P_2-\bar x}{R_2}$, the outer normal to $B(\bar x,R_2)$. Furthermore $u \prec_{P_2} u_2$. Hence 
\begin{equation}\label{eq4}
q(R_2) \leq -\dfrac{\partial u_2}{\partial\nu}(P_2)= c_p \, R_2.
\end{equation} 
Inequalities \eqref{eq3} and~\eqref{eq4} may be rephrased as
\begin{equation}\label{inequalities}
\mbox{$q(R_1) - c_p \, R_1\geq 0$ and $q(R_2) - c_p \, R_2\leq 0$.}
\end{equation}
Since the equation $q(r) - c_p \, r=0$ has the unique solution $R$ in $[R_1,R_2]$ and $q(r) - c_p \, r<0$ for $r<R,$ we have $R=R_1$ and again since $q(r) - c_p \, r>0$ for $r>R,$ we have $R=R_2$. Therefore, $R_1=R=R_2,$ which is $\Omega=B(\bar x,R).$

(2)\; If \eqref{eq1} has a solution, then we obtain \eqref{eq3} and \eqref{eq4}, hence $\rho(r)$ satisfies
$$\rho(R_2)=\dfrac{q(R_2)}{R_2}\leq c_p\leq \dfrac{q(R_1)}{R_1}=\rho(R_1).$$
Since $\rho$ is strictly increasing,  we must have $R_1=R_2$ and the result follows.

(3)\; Suppose that \eqref{eq1} has a solution. Then, by~\eqref{inequalities}, and since $q$ is continuous, we have $q(R) - c_p \, R=0$ at some point $R\in [R_1,R_2]$, contradicting the assumption. Therefore problem~\eqref{eq1} must be unsolvable.
\end{proof}

\begin{proof}[Proof of Theorem~\ref{p=1}]
The result follows by exploiting~\eqref{degenerate}. As mentioned in the Introduction, we have $H(x) > 0$ for every $x \in \partial \Omega$.
Take $P_i \in \partial B(\bar x, R_i) \cap \partial \Omega$, $i = 1,2$ as in the proof of Theorem~\ref{main}, and recall that the mean curvature of the sphere $\partial B(\bar x, R_i)$ is $1/R_i$. Hence we may write $H(P_1) \le 1/R_1$ and $H(P_2) \ge 1/R_2$. This and~\eqref{degenerate} imply
\begin{align*}
q(R_1, \, 1/R_1) \ge q(R_1,H(P_1)) = |Du(P_1)| = \frac1{\, (n - 1) \, H(P_1) \,}
\ge \frac{R_1}{\, n - 1 \,} = c_1 \, R_1
\\
q(R_2, \, 1/R_2) \le q(R_2,H(P_2)) = |Du(P_2)| = \frac1{\, (n - 1) \, H(P_2) \,}
\le \frac{R_2}{\, n - 1 \,} = c_1 \, R_2
\end{align*}
because $q(r,h)$ is monotone non-decreasing in~$h$.
The inequalities above imply
$$
\frac{\, q(R_1, \, 1/R_1) \,}{R_1}
\ge
c_1
\ge
\frac{\, q(R_2, \, 1/R_2) \,}{R_2}
.
$$%
Then, since the ratio $\frac{q(r, \, 1/r)}r$ is strictly increasing in~$r$, we must have $R_1 = R_2$ as claimed.
\end{proof}

\section*{Funding}

The authors are partially supported by the research project \textit{Analysis of PDEs in connection with real phenomena}, CUP F73C22001130007, funded by Fondazione di Sardegna (annuity 2021). L.~Cadeddu and A.~Greco are members of the Gruppo Nazionale per l'A\-na\-li\-si Matematica, la Probabilit\`a e le loro Applicazioni (GNAMPA) of the Istituto Na\-zio\-na\-le di Alta Matematica (INdAM). L.~Cadeddu's research has been accomplished within the UMI Group TAA {\it Approximation Theory and Applications}.

\end{document}